\let\oldbibliography\thebibliography
\renewcommand{\thebibliography}[1]{\oldbibliography{#1}\setlength{\itemsep}{0pt}}
\newtheorem{theorem}{Theorem}[section]
\newcommand{\R}{\mathbb R}
\newcommand{\Sp}{\mathbb S}
\begin{document}

\title{\textbf{Blow up limits of the fractional Laplacian and their applications to the fractional Nirenberg problem}\bigskip}

\author{\medskip Xusheng Du, \quad Tianling Jin\footnote{T. Jin was partially supported by Hong Kong RGC grant GRF 16302217.}, \quad Jingang Xiong\footnote{J. Xiong was is partially supported by NSFC grants 11922104 and 11631002.}, \quad Hui Yang}

\date{\today}

\maketitle

\begin{abstract}
We show a convergence result of the fractional Laplacian for sequences of nonnegative functions without uniform boundedness near infinity. As an application, we construct a sequence of solutions to the fractional Nirenberg problem that blows up in the region where the prescribed functions are negative. This is a different phenomenon from the classical Nirenberg problem. 
\medskip

\noindent{\it Keywords}: Fractional Laplacian,  fractional Nirenberg problem, blow up phenomena

\medskip

\noindent {\it MSC (2020)}: 35R11; 35B44 

\end{abstract}

\section{Introduction}
Let $n\ge 1$ be an integer. In the blow up analysis for equations with the Laplacian operator $\Delta$,  the following trivial fact plays an essential role: 
\begin{center}
``\it{If $u_i\to u$  in $C_{loc}^2 (\R^n)$  as $i\to\infty$, then $\Delta u_i\to\Delta u$   in $C_{loc} (\R^n)$.}"
\end{center}
In this paper, we first show a similar but different property for the fractional Laplacian operator. 

Let $\sigma \in (0, 1)$. The fractional Laplacian $(-\Delta)^\sigma$ is defined by
\[
(-\Delta)^\sigma u(x)=c_{n,\sigma} {\rm P.V.} \int_{\R^n}\frac{u(x)-u(y)}{|x-y|^{n+2\sigma}}\,dy 
\]
with $c_{n, \sigma} = \frac{ 2^{2 \sigma} \sigma \Gamma \left( \frac{n + 2 \sigma}{2} \right) }{ \pi^{n/2} \Gamma (1 - \sigma) }$ and the gamma function $\Gamma$.  Define
\[
L_\sigma (\R^n) = \left\{ u \in L_{loc}^1 (\R^n) : \int_{\R^n} \frac{|u(x)|}{ 1 + |x|^{n + 2 \sigma} } dx < \infty \right\}.
\]
Then $(- \Delta)^\sigma u (x)$ is well-defined if $u \in L_\sigma (\R^n)$ and $u$ is $C^{2\sigma+\alpha}$ near $x$ for some $\alpha>0$.  

\begin{theorem}\label{thm:convergence}
Let $n \ge 1$, $\sigma \in (0, 1)$ and $\alpha > 0$. Suppose that $\{ R_i \}$ is a sequence of positive numbers converging to $+ \infty$, and $\{ u_i \} \subset L_\sigma (\R^n) \cap C^{2 \sigma + \alpha} (B_{R_i})$ is a sequence of nonnegative functions. If $\{ u_i \}$ converges in $C_{loc}^{2 \sigma + \alpha} (\R^n)$ to a function $u\in L_\sigma(\R^n)$, and $\{ (- \Delta)^\sigma u_i \}$ converges pointwisely in $\R^n$, then there exists a constant $b \geq 0$ such that
\begin{equation}\label{eq:witha}
\lim_{i \to \infty} (- \Delta)^\sigma u_i (x) = (- \Delta)^\sigma u(x) - b ~~~~~~ \forall \, x \in \R^n.
\end{equation}
Moreover, 
\[
b=c_{n,\sigma}\lim_{R\to\infty}\lim_{i\to\infty} \int_{B_R^c} \frac{u_i (x)}{ |x|^{n + 2 \sigma } } \,dx\quad\mbox{(the limit exists and is finite)}.
\]
\end{theorem}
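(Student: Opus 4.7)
The plan is to localize $(-\Delta)^\sigma u_i$ around $x$ and to extract the ``missing mass at infinity'' which arises because $\{u_i\}$ need not be uniformly bounded outside compact sets. Fix $x\in\R^n$ and $R>0$, and decompose
\[
(-\Delta)^\sigma u_i(x) = I_i(R,x) + c_{n,\sigma}\, u_i(x)\!\int_{B_R(x)^c}\!\frac{dy}{|x-y|^{n+2\sigma}} - J_i(R,x),
\]
where $I_i(R,x)=c_{n,\sigma}\,{\rm P.V.}\!\int_{B_R(x)}\frac{u_i(x)-u_i(y)}{|x-y|^{n+2\sigma}}\,dy$ and $J_i(R,x)=c_{n,\sigma}\!\int_{B_R(x)^c}\frac{u_i(y)}{|x-y|^{n+2\sigma}}\,dy$. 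The first two summands converge as $i\to\infty$: $I_i(R,x)\to I(R,x)$ by the $C^{2\sigma+\alpha}_{loc}$ convergence and $u_i(x)\to u(x)$ pointwise. Combined with the hypothesis that $(-\Delta)^\sigma u_i(x)$ has a limit, this forces $J(R,x):=\lim_{i\to\infty}J_i(R,x)$ to exist and be finite for every $R>0$.

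Writing the analogous decomposition for $u$ and subtracting yields
\[
b(x) := (-\Delta)^\sigma u(x) - \lim_{i\to\infty}(-\Delta)^\sigma u_i(x) = J(R,x) - c_{n,\sigma}\!\int_{B_R(x)^c}\!\frac{u(y)}{|x-y|^{n+2\sigma}}\,dy,
\]
and the right-hand side is independent of $R$. Fatou's lemma applied to the nonnegative integrands $u_i(y)|x-y|^{-n-2\sigma}$ on $B_R(x)^c$ gives $b(x)\ge 0$. Because $u\in L_\sigma(\R^n)$, dominated convergence implies $\int_{B_R(x)^c}u(y)|x-y|^{-n-2\sigma}\,dy\to 0$ as $R\to\infty$, hence $b(x)=\lim_{R\to\infty}J(R,x)$.

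To conclude, I must show that $b(x)$ is independent of $x$ and equals the stated formula. Applying the existence step above at $x=0$ shows that $\ell(R):=\lim_{i\to\infty}\!\int_{B_R^c}u_i(y)|y|^{-n-2\sigma}\,dy$ exists and is finite for every $R>0$; it is monotonically nonincreasing in $R$, so $\lim_{R\to\infty}\ell(R)\in[0,\infty)$ exists. Given $\varepsilon>0$, choose $R$ so large that $(1-\varepsilon)|y|\le |x-y|\le(1+\varepsilon)|y|$ whenever $|y|\ge R-|x|$. Using the inclusions $B_{R+|x|}^c\subset B_R(x)^c\subset B_{R-|x|}^c$ together with $u_i\ge 0$, I sandwich
\[
(1+\varepsilon)^{-n-2\sigma}\!\int_{B_{R+|x|}^c}\!\frac{u_i(y)}{|y|^{n+2\sigma}}\,dy \le \!\int_{B_R(x)^c}\!\frac{u_i(y)}{|x-y|^{n+2\sigma}}\,dy \le (1-\varepsilon)^{-n-2\sigma}\!\int_{B_{R-|x|}^c}\!\frac{u_i(y)}{|y|^{n+2\sigma}}\,dy.
\]
Sending $i\to\infty$, then $R\to\infty$, then $\varepsilon\to 0$ gives $b(x)=c_{n,\sigma}\lim_{R\to\infty}\ell(R)$, which is a nonnegative constant independent of $x$ and coincides with the formula in the theorem; \eqref{eq:witha} then follows from the definition of $b(x)$.

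The main obstacle is this last sandwich, where I must simultaneously shift the base point of the ball from $x$ to $0$ and interchange the $i$- and $R$-limits. Both maneuvers rely crucially on the hypothesis $u_i\ge 0$, which promotes a pointwise kernel comparison to a monotone integral estimate that is stable under the $i\to\infty$ limit. Without a sign condition one could only control the escaping mass in absolute value and would obtain two constants bounding the difference $(-\Delta)^\sigma u - \lim(-\Delta)^\sigma u_i$ from above and below rather than pinning down a single $b$.
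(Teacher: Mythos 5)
Your proof is correct and follows essentially the same strategy as the paper's. You both isolate the tail integral $\int$ of $u_i(y)$ against the kernel outside a large ball, observe that its limit in $i$ must exist because everything else in the decomposition of $(-\Delta)^\sigma u_i(x)$ converges, and then exploit nonnegativity of $u_i$ to compare the kernel $|x-y|^{-n-2\sigma}$ with $|y|^{-n-2\sigma}$ and to get monotonicity in $R$. The only cosmetic difference is that the paper centers the exterior region at the origin ($B_R^c$) throughout, so the sandwich $\left(\tfrac{R}{R+|x|}\right)^{n+2\sigma}F(0,R)\le F(x,R)\le\left(\tfrac{R}{R-|x|}\right)^{n+2\sigma}F(0,R)$ comes directly from the pointwise kernel bounds on $B_R^c$, whereas you center the exterior region at $x$ and then transfer to origin-centered balls via the inclusions $B_{R+|x|}^c\subset B_R(x)^c\subset B_{R-|x|}^c$ plus a quantitative $\varepsilon$-comparison of kernels. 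Both versions are valid, and the monotonicity of $\ell(R)$ in $R$ that you use to handle the shifted radii $R\pm|x|$ plays the same role as the paper's observation that $F(x,R)$ is nonincreasing in $R$; your Fatou argument for $b(x)\ge 0$ is a slightly more explicit rendering of the paper's remark that $F$ is nonnegative.
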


Note that Theorem \ref{thm:convergence} does not assume uniform boundedness of $\{ u_i \} $ near infinity in any topology, but rather their non-negativity instead. The following example shows that the constant $b$ in \eqref{eq:witha} can be positive. 

\begin{theorem}\label{thm:convergencenonzero}
Let $n\ge 1$ and $\sigma \in (0, 1)$. There exists a sequence of nonnegative smooth functions $\{ v_j \}$ with compact support in $\R^n$ such that $v_j $ converges to $1$ in $C_{loc}^2 (\R^n)$, and 
$$
\lim_{j \to \infty} (- \Delta)^\sigma v_j (x) = -1 ~~~~~~ \forall \, x \in \R^n.
$$
\end{theorem}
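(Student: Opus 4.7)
The plan is to write $v_j$ as a sum of two pieces: a tall cutoff at a large scale $R_j$, which is identically $1$ on a huge ball and forces the $C^2_{\mathrm{loc}}$ convergence to $1$, plus a tall but far-away bump concentrated in an annulus of a much larger scale $S_j$, whose normalization is tuned so that it contributes exactly $-1$ to the pointwise limit of $(-\Delta)^\sigma v_j$. Concretely, fix once and for all a nonnegative radial $\eta\in C_c^\infty(\R^n)$ with $\eta\equiv 1$ on $B_1$, $\mathrm{supp}\,\eta\subset B_2$, and a nonnegative radial $\psi\in C_c^\infty(\R^n)$ with $\mathrm{supp}\,\psi\subset\{1<|z|<2\}$ and $A:=\int_{\R^n}\psi(z)|z|^{-n-2\sigma}\,dz>0$. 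Choose $R_j\to\infty$ and $S_j\to\infty$ with $S_j>4R_j$, and set
\[
v_j(x)=\eta(x/R_j)+M_j\,\psi(x/S_j),\qquad M_j=\frac{S_j^{2\sigma}}{c_{n,\sigma}\,A}.
\]
Then $v_j\ge 0$ and $v_j\in C_c^\infty(\R^n)$.

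For any compact $K\subset\R^n$ and any $j$ large enough that $K\subset B_{R_j}$ and $K\cap\{S_j\le|x|\le 2S_j\}=\emptyset$, we have $v_j\equiv 1$ on $K$, so $v_j\to 1$ in $C_{\mathrm{loc}}^k(\R^n)$ for every $k$. In particular $C^2_{\mathrm{loc}}$ convergence is immediate.

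Fix $x\in\R^n$. For $j$ large, $v_j\equiv 1$ in a neighborhood of $x$, so there is no principal-value issue and
\[
(-\Delta)^\sigma v_j(x)=c_{n,\sigma}\int_{\R^n}\frac{1-v_j(y)}{|x-y|^{n+2\sigma}}\,dy=I_j(x)-J_j(x),
\]
where
\[
I_j(x)=c_{n,\sigma}\int_{\R^n}\frac{1-\eta(y/R_j)}{|x-y|^{n+2\sigma}}\,dy,\quad J_j(x)=c_{n,\sigma}M_j\int_{\R^n}\frac{\psi(y/S_j)}{|x-y|^{n+2\sigma}}\,dy.
\]
Changing variables $y=R_j z$ in $I_j$ gives $I_j(x)=R_j^{-2\sigma}c_{n,\sigma}\int(1-\eta(z))|z-x/R_j|^{-n-2\sigma}\,dz\to 0$ as $R_j\to\infty$, since the integral converges uniformly in $x/R_j$ near the origin. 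Changing variables $y=S_j z$ in $J_j$ gives $J_j(x)=c_{n,\sigma}M_jS_j^{-2\sigma}\int\psi(z)|z-x/S_j|^{-n-2\sigma}\,dz$, and as $S_j\to\infty$ with $x$ fixed this integral tends to $A$ by dominated convergence (on $\mathrm{supp}\,\psi$ the denominator is uniformly bounded below). By the choice of $M_j$, $c_{n,\sigma}M_jS_j^{-2\sigma}A=1$, so $J_j(x)\to 1$ and hence $(-\Delta)^\sigma v_j(x)\to -1$.

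There is essentially no obstacle beyond correctly balancing the two scales: the growth $M_j\sim S_j^{2\sigma}$ must exactly cancel the decay $|x-y|^{-n-2\sigma}\sim S_j^{-n-2\sigma}$ against the volume factor $S_j^n$ of the bump's support, and this is the content of the choice of $M_j$. As a sanity check, one may compute $b$ from Theorem \ref{thm:convergence}: the bump contributes $c_{n,\sigma}\int M_j\psi(y/S_j)|y|^{-n-2\sigma}\,dy=c_{n,\sigma}M_jS_j^{-2\sigma}A=1$ independently of $R>0$ once $S_j>R$, while the cutoff contributes at most $\int_{B_R^c}|y|^{-n-2\sigma}\,dy\to 0$ as $R\to\infty$, so $b=1$ and the answer $-b=-1$ is consistent.
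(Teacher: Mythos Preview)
Your proof is correct and follows essentially the same idea as the paper's: a plateau of height $1$ on a large ball together with far-away mass of height $\sim R^{2\sigma}$ at distance $\sim R$, so that the nonlocal tail of $(-\Delta)^\sigma$ picks up a nonzero constant in the limit. The only cosmetic difference is that the paper packages both pieces into a single template $w_\lambda$ and then rescales (one scale, with a normalizing constant $\beta$), whereas you separate the cutoff and the annular bump at two independent scales $R_j,S_j$ and normalize the bump height $M_j$ directly; the underlying mechanism is identical.
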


The nonzero constant $b$ in Theorem \ref{thm:convergence} will lead to some different blow up phenomena for equations with the fractional Laplacian, compared with those involving the Laplacian. 

In \cite{GS2}, Gidas-Spruck used blow up analysis, together with Liouville theorems that were proved by them earlier in \cite{GS} , to derive a priori estimate of positive solutions to general elliptic equations including
\begin{equation}\label{SecOr01}
- \Delta  u = K(x) u^p, 
\end{equation}
where $p$ is a subcritical Sobolev exponent, i.e., $1<p<\infty$ if $n=1,2$, or $1<p<\frac{n+2}{n-2}$ if $n\ge 3$.

When $n\ge 3$ and $p=\frac{n+2}{n-2}$, which is the critical Sobolev case, the equation \eqref{SecOr01} is the one for the Nirenberg problem, that is the problem of finding a metric conformal to the flat metric on $\mathbb{R}^n$ such that $K(x)$ is the scalar curvature of the new metric.  One main ingredient in establishing the existence of solutions to the Nirenberg problem is to obtain a priori estimates for the solutions of \eqref{SecOr01}, which have been achieved in Chang-Gursky-Yang \cite{CGY93}, Li \cite{Li95} and Schoen-Zhang \cite{SZ96} for positive functions $K$, and in Chen-Li \cite{CL97} and Lin \cite{Lin98} for $K$ changing signs. In particular, when $K$ is allowed to change signs, one important step is to estimate the solutions in the region where $K$ is negative. It has been shown in Chen-Li \cite{CL97} and Lin \cite{Lin98} that the solutions of \eqref{SecOr01} in $B_2$ are uniformly bounded in $B_{1}$ provided that $K \in C^\alpha(B_2)$ for some $\alpha \in(0,1)$  and $K(x) \leq -c_0$ in $B_2$ for some $c_0>0$. The uniform bound depends only on $n$, $c_0$ and $\|K\|_{C^\alpha(B_2)}$.

On the contrary, we will show that such a priori estimates do not hold for the fractional Nirenberg problem. The fractional Nirenberg problem is equivalent to studying the equations  
\begin{equation}\label{eq:sign}
(- \Delta)^\sigma u = K(x) u^p ~~~~~~ \textmd{in} ~ \R^n.
\end{equation} 
We will construct a sequence of positive smooth solutions to \eqref{eq:sign} that blows up in the region where $K$ is negative. Note that a priori estimates of the fractional equation \eqref{eq:sign} for positive functions $K$ have been derived in Jin-Li-Xiong \cite{JLX14,JLX15,JLX17}.  More generally, blow up can occur for the equation \eqref{eq:sign} with any $p \in \R$.

\begin{theorem}\label{thm:blowup}
Let $n\ge 1$, $\sigma \in (0, 1)$, $p \in \R$ and $q > - 2 \sigma$. There exist two positive constants $c$ and $C$ depending only on $n$, $\sigma$, $p$ and $q$, a family of functions $\{ K_\lambda \}_{\lambda \geq 1} \subset C^\infty (\R^n)$ satisfying
$$
- C \leq K_\lambda (x) \leq - c, \ \ c \leq | \nabla K_\lambda (x) | \leq C \ \ \mbox{and} \ \ | \nabla^2 K_\lambda (x) | \leq C \quad \forall \, x \in B_2 \ \mbox{and} \ \forall \, \lambda \geq 1,
$$
and a family of positive functions $\{ u_\lambda \}_{\lambda \geq 1} \subset C^\infty (\R^n)$ satisfying
$$
(- \Delta)^\sigma u_\lambda = K_\lambda (x) u_\lambda^p \quad \mbox{in} ~ \R^n, \qquad \lim_{|x| \to \infty} |x|^q u_\lambda (x) = 1,
$$
and
$$
\min_{ \overline{B}_1 } u_\lambda \to + \infty \quad \mbox{ as } \lambda \to + \infty.
$$
\end{theorem}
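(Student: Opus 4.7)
My plan is to construct $u_\lambda$ explicitly as a three-scale profile exploiting the ``distant-mass negative influence'' mechanism isolated in Theorems \ref{thm:convergence}--\ref{thm:convergencenonzero}. Specifically, $u_\lambda$ would consist of a ``plateau'' of height $\Lambda = \Lambda(\lambda)\to\infty$ with a small designed anisotropic perturbation, occupying $B_R$ with $R = R(\lambda) \to \infty$; a taller ``hump'' of height $M > \Lambda$ supported in a distant annulus $\{R < |y| < 3R\}$; and the prescribed tail $\phi(x) = |x|^{-q}$ (smoothed near the origin) beyond $B_{3R}$. On $B_2$, the hump contributes $\sim -A_1(M-\Lambda)R^{-2\sigma}$ to $(-\Delta)^\sigma u_\lambda$ and the tail contributes a positive $\sim A_2 \Lambda R^{-2\sigma}$ (finite because $q > -2\sigma$), so the balance $M - \Lambda \asymp R^{2\sigma}\Lambda^p$ forces $(-\Delta)^\sigma u_\lambda \approx -\Lambda^p$ there, whence $K_\lambda := (-\Delta)^\sigma u_\lambda / u_\lambda^p$ will be bounded and negative.

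Concretely, fix $\phi \in C^\infty(\R^n)$ positive with $\phi(x) = |x|^{-q}$ for $|x| \geq 2$, fix $g \in C_c^\infty(B_{3/2})$ with $|\nabla g| \geq c_0$ and $|\nabla(-\Delta)^\sigma g| \geq c_0$ on $\overline{B}_1$, and take smooth radial cutoffs $\eta_1, \eta_2, \eta_3$ supported in $B_{3R/2}$, $\{3R/2 < |y| < 5R/2\}$, and $\R^n \setminus B_{5R/2}$ respectively, with $\eta_1 \equiv 1$ on $B_R$ and $\eta_3 \equiv 1$ on $\R^n \setminus B_{3R}$. Set
\[
u_\lambda(x) = \Lambda\bigl(1 + \varepsilon g(x)\bigr)\eta_1(x) + M \eta_2(x) + \phi(x)\eta_3(x),
\]
with $M = \Lambda + \kappa R^{2\sigma}\Lambda^p$ (the positive constant $\kappa$ depending only on $n, \sigma$ and the cutoffs), with $R = R(\lambda)$ chosen large enough that the plateau--tail cross-term remains $o(\Lambda^p)$, and with $\varepsilon = \varepsilon(\lambda)$ tuned case by case: a small positive constant when $p \geq 1$, and $\varepsilon = \delta \Lambda^{p-1} \to 0$ when $p < 1$ (small constant $\delta > 0$). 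Then $u_\lambda \in C^\infty(\R^n)$ is positive, equals $\phi$ outside a compact set, and so $\lim_{|x|\to\infty}|x|^q u_\lambda(x) = 1$ and $\min_{\overline{B}_1} u_\lambda \geq \Lambda/2 \to \infty$ follow immediately.

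Splitting the integral defining $(-\Delta)^\sigma u_\lambda(x)$ for $x \in B_2$ into the plateau region $B_R$, the hump annulus, and $\R^n \setminus B_{3R}$ and collecting leading terms would yield
\[
(-\Delta)^\sigma u_\lambda(x) = -\Lambda^p + \Lambda \varepsilon (-\Delta)^\sigma g(x) + o(\Lambda^p),
\]
and dividing by $u_\lambda^p = \Lambda^p(1 + \varepsilon g(x))^p$ gives
\[
K_\lambda(x) = \frac{-1 + \Lambda^{1-p}\varepsilon (-\Delta)^\sigma g(x)}{(1 + \varepsilon g(x))^p} + o(1),
\]
which would lie uniformly in $[-C, -c]$ with bounded $C^2$-norm on $B_2$. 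The main obstacle will be the lower bound $|\nabla K_\lambda| \geq c$: the gradient of $(-\Delta)^\sigma u_\lambda$ produced by the distant hump is only $O(\Lambda^p/R)$ and vanishes upon normalisation by $\Lambda^p$, so a non-vanishing gradient for $K_\lambda$ must come from the perturbation $\varepsilon g$ inside the plateau. When $p \geq 1$ it is generated through the denominator via $\varepsilon \nabla g$, and when $p < 1$ through the numerator via $\Lambda^{1-p}\varepsilon \nabla(-\Delta)^\sigma g = \delta\nabla(-\Delta)^\sigma g$. Careful bookkeeping of the $o(\Lambda^p)$ error terms across these parameter regimes, together with the verification of the $C^2$-bounds on $K_\lambda$ (inherited from the $C^2$-norms of $g$ and $\phi$), is the most delicate part of the argument.
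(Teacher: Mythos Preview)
Your three-scale plateau/hump/tail construction is the right idea and mirrors the paper's, but there is one geometric choice where you diverge from the paper, and it is precisely this choice that creates all of your difficulties with the gradient lower bound.

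You place the hump in the annulus $\{R<|y|<3R\}$ with $R\to\infty$. Consequently, for $x\in B_2$ the hump contribution to $(-\Delta)^\sigma u_\lambda(x)$ is $-c_{n,\sigma}\int_{\text{hump}} M\eta_2(y)\,|x-y|^{-n-2\sigma}\,dy$, whose $x$-dependence is only through $x/R$ and thus is $O(1/R)$ after normalisation. This forces $K_\lambda$ to be essentially constant on $B_2$, and you then have to manufacture a gradient by inserting the perturbation $\varepsilon g$. That in turn produces the $p$-dependent case split, the need for a $g$ with \emph{both} $|\nabla g|\ge c_0$ and $|\nabla(-\Delta)^\sigma g|\ge c_0$ on $\overline B_1$ (whose existence you assert but do not justify), and an unhandled borderline at $p=1$ where the leading gradient term becomes $\varepsilon\bigl(\nabla g+\nabla(-\Delta)^\sigma g\bigr)$, which could vanish. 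There are also two smaller slips: with disjoint cutoff supports $u_\lambda$ vanishes at $|y|=3R/2$ and $|y|=5R/2$, destroying positivity (fatal when $p\le 0$); and your gradient bound is only obtained on $\overline B_1$, not on $B_2$ as the theorem demands.

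The paper avoids all of this by placing the hump at a \emph{fixed} distance: $u_\lambda=\lambda$ on $B_3$, $u_\lambda=\lambda+\lambda^p$ on $B_R\setminus B_4$ (with smooth transitions), tail $|x|^{-q}$ beyond $B_R$. For $x\in B_2$ one then gets directly
\[
K_\lambda(x)=\frac{(-\Delta)^\sigma u_\lambda(x)}{\lambda^p}
= -c_{n,\sigma}\!\int_{B_4\setminus B_3}\frac{\psi(y)\,dy}{|x-y|^{n+2\sigma}}
  -c_{n,\sigma}\!\int_{B_R\setminus B_4}\frac{dy}{|x-y|^{n+2\sigma}}+I_3(x),
\]
and because the mass sits at $|y|\sim 3$, the integrals carry genuine $x$-dependence. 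No perturbation $g$ is needed and there is no case split in $p$. Since the construction is radial, $\nabla K_\lambda(0)=0$; a direct computation shows $\nabla^2K_\lambda(0)\le -c_4\,\mathbf I_n$ uniformly in $\lambda$, so by Taylor expansion $|\nabla K_\lambda|\ge c_5$ on a small annulus $B_{2\delta_0}(4\delta_0 e_1)$. A final translation and dilation $\widetilde u_\lambda(x)=\delta_0^{q}u_\lambda\bigl(\delta_0(x+4e_1)\bigr)$ moves this annulus to $B_2$ and fixes the decay constant.

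In short: move the hump in to a fixed radius and drop $g$ entirely; the gradient of $K_\lambda$ then comes for free from the kernel $|x-y|^{-n-2\sigma}$, and the only extra work is a Hessian calculation plus a rescale.
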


The failure of the compactness of solutions to \eqref{eq:sign} in the region where $K$ is negative is caused by Theorem \ref{thm:convergence} and Theorem \ref{thm:convergencenonzero} that the blow up limit equation for the fractional equation \eqref{eq:sign} in the region $\{K(x) \leq -c_0 \}$ is
\begin{equation}\label{eq:globalnegative}
(-\Delta)^\sigma u -b=-u^{p}\quad\mbox{in }\R^n
\end{equation}
with a constant $b\ge 0$, and $b$ can be positive. If $b>0$, then the equation \eqref{eq:globalnegative} clearly has the positive constant $b^{\frac1p}$ as a solution. Such nonzero constant solutions of \eqref{eq:globalnegative} cannot be ruled out if one does not know a priori uniform boundedness in any topology.

This paper is organized as follows.  In Section \ref{Sect3}, we show the convergence for the fractional Laplacian in Theorem \ref{thm:convergence} and give the example in 
Theorem \ref{thm:convergencenonzero}. In Section \ref{Sect02}, we construct blow up solutions for the fractional Nirenberg problem stated in Theorem \ref{thm:blowup}.

\section{Convergence for the fractional Laplacian}\label{Sect3}

\begin{proof}[Proof of Theorem \ref{thm:convergence}]
Our proof is inspired by that of \cite[Proposition 2.9]{JLX17} on an integral equation.  Fix $x \in \R^n$. Let $R \gg |x| + 1$. Then for all large $i$, we have
\begin{equation}\label{AEF} 
\aligned
(- \Delta)^\sigma u(x) - (- \Delta)^\sigma u_i (x) = & ~ c_{n, \sigma} \int_{B_R} \frac{ (u - u_i) (x) - (u - u_i) (y) }{ |x - y|^{n + 2 \sigma} } d y \\
& + c_{n, \sigma} \int_{B_R^c} \frac{ (u - u_i) (x) - u(y) }{ |x - y|^{n + 2 \sigma} } d y \\
& + c_{n, \sigma} \int_{B_R^c} \frac{u_i (y)}{ |x - y|^{n + 2 \sigma} } d y \\
= : & ~ A_i (x, R) + E_i (x, R) + F_i (x, R).
\endaligned
\end{equation}
By the assumptions, $\{ u_i \}$ converges to $u$ in $C^{2 \sigma + \alpha} (B_{2R})$. Hence,
\begin{equation}\label{eq:limitA}
\lim_{i \to \infty} A_i (x, R) = 0 ~~~~~~ \textmd{and} ~~~~~~ \lim_{i \to \infty} E_i (x, R) = - c_{n, \sigma} \int_{B_R^c} \frac{u(y)}{ |x - y|^{n + 2 \sigma} } d y.
\end{equation}
Since $u\in L_\sigma(\R^n)$, we have
\begin{equation}\label{eq:limitE}
\lim_{R\to+\infty}\left| \lim_{i \to \infty} E_i (x, R) \right|=0.
\end{equation}
Since the sequence $\{ (- \Delta)^\sigma u_i  (x) \}$ converges, it follows from \eqref{AEF} that
$$
F(x, R) : = \lim_{i \to \infty} F_i (x, R) \quad \mbox{exists and is finite}.
$$
Sending $i \to \infty$ in \eqref{AEF}, and using \eqref{eq:limitA}-\eqref{eq:limitE}, we obtain
\begin{equation}\label{CyR}
\lim_{R\to+\infty} \left| (- \Delta)^\sigma u(x) - \lim_{i \to \infty} (- \Delta)^\sigma u_i (x) - F(x, R) \right| =0.
\end{equation}

Since $u_i$ is nonnegative on $\R^n$, we have
$$
\left( \frac{R}{R + |x|} \right)^{n + 2 \sigma} \int_{B_R^c} \frac{u_i (y)}{ |y|^{n + 2 \sigma} } d y \leq \int_{B_R^c} \frac{u_i (y)}{ |x - y|^{n + 2 \sigma} } d y \leq \left( \frac{R}{R - |x|} \right)^{n + 2 \sigma} \int_{B_R^c} \frac{u_i (y)}{ |y|^{n + 2 \sigma} } d y.
$$
Sending $i\to\infty$, it follows that
\begin{equation}\label{eq:limitF}
\left( \frac{R}{R + |x|} \right)^{n + 2 \sigma} F(0, R) \leq F(x, R) \leq \left( \frac{R}{R - |x|} \right)^{n + 2 \sigma} F(0, R).
\end{equation}
Notice that $F_i (x, R)$ is nonnegative and non-increasing in $R$, so is $F(x, R)$. Hence, $\displaystyle\lim_{R \to \infty} F(x, R)$ exists, and is nonnegative and finite. By sending $R$ to $\infty$ in \eqref{eq:limitF}, we obtain
$$
\lim_{R \to \infty} F(x, R) = \lim_{R \to \infty} F(0, R) = : b \geq 0.
$$
Then the conclusion follows from \eqref{CyR}.
\end{proof}

The proof of Theorem \ref{thm:convergencenonzero} can be illustrated by the following simple functions. Let $\lambda > 0$. Define
\[
W_\lambda (x) = \left\{
\aligned
& \lambda ~~~~~~ & \textmd{if} & ~ x \in B_3, \\
& \lambda + \lambda^2 ~~~~~~ & \textmd{if} & ~ x \in B_3^c.
\endaligned
\right.
\]
For $j \ge 1$, let
$$
R_j = j^\frac{1}{2 \sigma},
$$
and
$$
V_j (x) = j^{- 1} W_j \left( j^{ - \frac{1}{2 \sigma} } x \right) = \left\{
\aligned
& 1 ~~~~~~ & \textmd{if} & ~ y \in B_{3 R_j}, \\
& 1 + j ~~~~~~ & \textmd{if} & ~ y \in B_{3 R_j}^c.
\endaligned
\right.
$$
Then $V_j$ converges to the constant function $1$ in $C_{loc}^{2} (\R^n)$. However, for any $x \in \R^n$, 
$$
\lim_{j\to\infty}(- \Delta)^\sigma V_j (x) = - c_{n, \sigma} \lim_{j\to\infty}\int_{B_3^c} \frac{d y}{ | j^{ - \frac{1}{2 \sigma} } x - y |^{n + 2 \sigma} } = - \frac{ c_{n, \sigma} |\Sp^{n - 1}| }{ 2 \sigma 3^{2 \sigma} }.
$$

A  mollification of $V_j$ by smoothing it out and cutting it off at infinity will give the example for Theorem \ref{thm:convergencenonzero}. The details are as follows.

\begin{proof}[Proof of Theorem \ref{thm:convergencenonzero}]
Let $\eta : \R \to [0, 1]$ be a $C^\infty$ cut-off function such that $\eta (t) \equiv 0$ if $t \leq 0$, $\eta (t) \equiv 1$ if $t \geq 1$, and $0 \leq \eta (t) \leq 1$ if $0 \leq t \leq 1$. Let $\psi (x): = \eta (|x| - 3)$ and $\varphi (x): = \eta (|x| - 6)$.

For $\lambda \geq 1$, define
$$
w_\lambda (x) = \left\{
\aligned
& \lambda ~~~~~~ & \textmd{if} & ~ x \in B_3, \\
& \lambda + \lambda^2 \psi (x) ~~~~~~ & \textmd{if} & ~ x \in B_4 \setminus B_3, \\
& \lambda + \lambda^2 ~~~~~~ & \textmd{if} & ~ x \in B_6 \setminus B_4, \\
& ( 1 - \varphi (x) ) ( \lambda + \lambda^2 ) ~~~~~~ & \textmd{if} & ~ x \in B_6^c.
\endaligned
\right.
$$
Then $w_\lambda \in C_c^\infty (\R^n)$, $w_\lambda\ge 0$ in $\R^n$, and ${\rm supp} \, (w_\lambda) \subset \overline{B}_{7}$ for any $\lambda \geq 1$.

Define
$$
f_\lambda (x) = \lambda^{- 2} (- \Delta)^\sigma w_\lambda (x), ~~~~~~ x \in \R^n.
$$
Then for $x \in B_2$,
$$
\aligned
f_\lambda (x) = & ~ - c_{n, \sigma} \int_{B_4 \setminus B_3} \frac{\psi (y)}{ |x - y|^{n + 2 \sigma} } d y - c_{n, \sigma} \int_{B_6 \setminus B_4} \frac{d y}{ |x - y|^{n + 2 \sigma} } \\
& ~ + \lambda^{- 1} c_{n, \sigma} \int_{ B_6^c } \frac{\varphi (y)}{ |x - y|^{n + 2 \sigma} } d y - c_{n, \sigma} \int_{ B_6^c } \frac{1 - \varphi (y)}{ |x - y|^{n + 2 \sigma} } d y\\
& ~ \to - c_{n, \sigma} \left( \int_{B_4 \setminus B_3} \frac{\psi (y)}{ |x - y|^{n + 2 \sigma} } d y + \int_{B_6 \setminus B_4} \frac{d y}{ |x - y|^{n + 2 \sigma} } + \int_{ B_6^c } \frac{1 - \varphi (y)}{ |x - y|^{n + 2 \sigma} } d y \right)
\endaligned
$$
uniformly as $\lambda$ tends to $\infty$.

Let 
\begin{equation*}
\beta= \left(- \lim_{\lambda \to \infty} f_\lambda (0)\right)^{- \frac{1}{2 \sigma}} .
\end{equation*}
For $ j \ge 1$, let
$$
R_j = \beta^{- 1} j^\frac{1}{2 \sigma} \quad\mbox{and}\quad v_j (x) = j^{- 1} w_j \left( \beta j^{ - \frac{1}{2 \sigma} } x \right).
$$
Then $v_j\in C_c^\infty(\R^n)$, $ v_j = 1$ in $B_{R_j}$, $ v_j \ge 0$ in $\R^n$, and
$$
(- \Delta)^\sigma v_j (x) = \beta^{2 \sigma } f_j \left( \beta j^{ - \frac{1}{2 \sigma} } x \right) ~~~~~~ \textmd{for} ~ x \in B_{R_j}.
$$
It follows that $v_j$ converges to the constant function $1$ in $C^2_{loc} (\R^n)$ as $j\to \infty$, and 
$$
\lim_{j \to \infty} (- \Delta)^\sigma v_j (x) = \beta^{2 \sigma} \lim_{j \to \infty} f_j (\beta j^{ - \frac{1}{2 \sigma} } x ) = - 1\quad \forall\, x \in \R^n.
$$
The proof is completed.
\end{proof}

\section{Blow up phenomena for a fractional Nirenberg problem when the prescribed functions are negative}\label{Sect02}

Our example to Theorem \ref{thm:blowup} is inspired by the following simple functions. Let $\lambda > 0$. Define
$$
U_\lambda (x) = \left\{
\aligned
& \lambda ~~~~~~ & \textmd{if} & ~ x \in B_3, \\
& \lambda + \lambda^p ~~~~~~ & \textmd{if} & ~ x \in B_3^c.
\endaligned
\right.
$$
Then for $x \in B_2$,
$$
\frac{ (- \Delta)^\sigma U_\lambda (x) }{U_\lambda (x)^p} = - c_{n, \sigma} \int_{B_3^c} \frac{d y}{ |x - y|^{n + 2 \sigma} } = : K(x).
$$
It is clear that $K$ is smooth in $B_2$, and bounded from above and below by two negative constants in $B_2$. Nevertheless,
$$
\min_{ x \in \overline{B}_1 } U_\lambda (x) \to + \infty ~~~~~~ \textmd{as} ~ \lambda \to + \infty.
$$

A similar mollification of $U_\lambda$ by smoothing it out and cutting it off at infinity will give the example for Theorem \ref{thm:blowup}. The details are as follows.

\begin{proof}[Proof of Theorem \ref{thm:blowup}] Let $\eta : \R \to [0, 1]$ be a $C^\infty$ cut-off function such that $\eta (t) \equiv 0$ if $t \leq 0$, $\eta (t) \equiv 1$ if $t \geq 1$, and $0 \leq \eta (t) \leq 1$ if $0 \leq t \leq 1$. For $\lambda \geq 1$, we define
$$
u_\lambda (x) = \left\{
\aligned
& \lambda ~~~~~~ & \textmd{if} & ~ x \in B_3, \\
& \lambda + \lambda^p \psi (x)  ~~~~~~ & \textmd{if} & ~ x \in B_4 \setminus B_3, \\
& \lambda + \lambda^p ~~~~~~ & \textmd{if} & ~ x \in B_R \setminus B_4, \\
& ( 1 - \varphi (x)  ) ( \lambda + \lambda^p ) + \varphi (x)  |x|^{- q} ~~~~~~ & \textmd{if} & ~ B_R^c,
\endaligned
\right.
$$
where $R = R (n,\sigma, p, q, \lambda) > 9$ is a large constant to be specified later, $\psi (x) : = \eta (|x| - 3)$ and $\varphi (x) : = \eta (|x| - R)$. Since $q > - 2 \sigma$, $u_\lambda \in C^\infty (\R^n) \cap L_\sigma (\R^n)$ and $u_\lambda>0$ in $\R^n$. Define
$$
K_\lambda (x) = \frac{ (- \Delta)^\sigma u_\lambda (x) }{u_\lambda (x)^p}, ~~~~~~ x \in \R^n.
$$
Then $K_\lambda \in C^\infty (\R^n)$.

\vskip0.1in

{\it Step 1. There exist two positive constants $c_1$ and $c_2$ depending only on $n$ and $\sigma$ such that}
\begin{equation}\label{eq:c1Kc2}
- c_1 \leq K_\lambda (x) \leq - c_2 ~~~~~~ \forall \, x \in B_2, ~ \lambda \geq 1.
\end{equation}

Indeed, for $x \in B_2$, we have
\begin{equation}\label{eq:B2K}
\aligned
c_{n, \sigma}^{- 1} K_\lambda (x) = & ~ - \int_{B_4 \setminus B_3} \frac{\psi (y)}{ |x - y|^{n + 2 \sigma} } d y - \int_{B_R \setminus B_4} \frac{d y}{ |x - y|^{n + 2 \sigma} } \\
& ~ +  \int_{B_R^c} \frac{\lambda^{1 - p}\varphi (y)+\varphi (y)-1 - \lambda^{- p} \varphi (y) |y|^{- q}}{ |x - y|^{n + 2 \sigma} } d y \\
= : & ~ \sum_{i = 1}^3 I_i.
\endaligned
\end{equation}
For convenience, denote
$$
\gamma_{n, \sigma} : = \int_{B_1^c} \frac{d y}{ |y|^{n + 2 \sigma} }=\frac{ |\Sp^{n - 1}| }{2 \sigma}, \quad \widetilde{\gamma}_{n, \sigma, q} : = \int_{B_1^c} \frac{d y}{ |y|^{n + 2 \sigma+q} }= \frac{|\Sp^{n - 1}|}{2 \sigma + q}.
$$
It is clear that 
$$
0\ge I_1 \geq - \int_{B_1 (x)^c} \frac{d y}{ |x - y|^{n + 2 \sigma} } = - \int_{B_1^c} \frac{d y}{ |y|^{n + 2 \sigma} } = - \gamma_{n, \sigma}.
$$
Similarly, we obtain
\[
 - \gamma_{n, \sigma} \le I_2 \leq - \int_{B_{R - 2} \setminus B_6} \frac{d y}{ |y|^{n + 2 \sigma} } = - \gamma_{n, \sigma} \left( 6^{- 2 \sigma} - (R - 2)^{- 2 \sigma} \right).
\]
Since $R>9$, we have $|x-y|\ge |y|/2$ for all $x\in B_2$ and $y\in B_R^c$. Thus,
\begin{align*}
|I_3|&\le  2^{n+2\sigma}\int_{B_{R}^c} \frac{(\lambda^{1 - p}+1 +\lambda^{-p} |y|^{-q})d y}{ |y|^{n + 2 \sigma} } \\
&= 2^{n+2\sigma}  \gamma_{n, \sigma}  (\lambda^{1 - p}+1) R^{- 2 \sigma}+ 2^{n+2\sigma}  \widetilde\gamma_{n, \sigma,q}  \lambda^{-p} R^{- 2 \sigma-q}.
\end{align*}
By choosing $R = R (n, \sigma, p, q, \lambda) > 9$ sufficiently large such that
$$
\gamma_{n, \sigma} (R - 2)^{- 2 \sigma} +2^{n+2\sigma}  \gamma_{n, \sigma}  (\lambda^{1 - p}+1) R^{- 2 \sigma}+ 2^{n+2\sigma}  \widetilde\gamma_{n, \sigma,q}  \lambda^{-p} R^{- 2 \sigma-q}\le \frac{\gamma_{n, \sigma} 6^{-2\sigma}}{2},
$$
we have for all $x \in B_2$ and $\lambda \geq 1$ that
\[
- 3c_{n, \sigma} \gamma_{n, \sigma} \leq K_\lambda (x) \leq - \frac{ c_{n, \sigma} \gamma_{n, \sigma} 6^{- 2 \sigma} }{2}.
\]
Step 1 is proved.

\vskip0.1in

{\it Step 2. Moreover, by differentiating \eqref{eq:B2K}, we can similarly obtain
\begin{equation}\label{eq:D2Kc3}
| \nabla K_\lambda (x) | + | \nabla^2 K_\lambda (x) | + | \nabla^3 K_\lambda (x) | \leq c_3 ~~~~~~ \forall \, x \in B_2, ~ \lambda \geq 1,
\end{equation}
for some constant $c_3 > 0$ depending only on $n$ and $\sigma$.}

\vskip0.1in

{\it Step 3. There exists a constant $c_4 > 0$ depending only on $n$ and $\sigma$ such that
\begin{equation}\label{eq:HessK}
\nabla^2 K_\lambda (0) \leq - c_4 {\bf I}_n ~~~~~~ \forall \, \lambda \geq 1,
\end{equation}
where ${\bf I}_n$ is the $n \times n$ identity matrix.}

Indeed, for $y \in B_3^c$, we have
$$
\frac{\partial^2}{\partial x_i \partial x_j} \left( |x - y|^{- n - 2 \sigma} \right) (0) = (n + 2 \sigma) |y|^{- n - 2 \sigma - 4} \left[ (n + 2 \sigma + 2) y_i y_j - \delta_{ij} |y|^2 \right].
$$
It follows from \eqref{eq:B2K} that
\begingroup
\allowdisplaybreaks
\begin{align*}
& [ (n + 2 \sigma) c_{n, \sigma} ]^{- 1} \frac{ \partial^2 K_\lambda }{\partial x_i \partial x_j} (0) \\
&=   - \int_{B_4 \setminus B_3} \frac{\psi (y)}{ |y|^{n + 2 \sigma + 4} } \left[ (n + 2 \sigma + 2) y_i y_j - \delta_{ij} |y|^2 \right] d y \\
& \quad - \int_{B_R \setminus B_4} \frac{1}{ |y|^{n + 2 \sigma + 4} } \left[ (n + 2 \sigma + 2) y_i y_j - \delta_{ij} |y|^2 \right] d y \\
& \quad + \lambda^{1 - p} \int_{B_R^c} \frac{\varphi (y)}{ |y|^{n + 2 \sigma + 4} } \left[ (n + 2 \sigma + 2) y_i y_j - \delta_{ij} |y|^2 \right] d y \\
& \quad - \int_{B_R^c} \frac{1- \varphi (y)+ \lambda^{- p} \varphi (y) |y|^{- q} }{ |y|^{n + 2 \sigma + 4} } \left[ (n + 2 \sigma + 2) y_i y_j - \delta_{ij} |y|^2 \right] d y \\
= : & ~ \sum_{l = 1}^4 E_{l, ij}.
\end{align*}
\endgroup
Using the polar coordinate, and the radial symmetry of $\psi$ and $\varphi$, we have
$$
\aligned
E_{1, ii} & = - \int_{B_4 \setminus B_3} \frac{\psi (y)}{ |y|^{n + 2 \sigma + 4} } \left[ \left( \frac{n + 2 \sigma + 2}{n} \right) |y|^2 - |y|^2 \right] d y  \leq 0.
\endaligned
$$
Similarly, $E_{4, ii} \le 0$,
$$
E_{2, ii} = - \frac{2 (\sigma + 1)}{n} \int_{B_R \setminus B_4} \frac{d y}{ |y|^{n + 2 \sigma + 2} } = - |B_1| \left( 4^{- 2 \sigma - 2} - R^{- 2 \sigma - 2} \right),
$$
and
$$
E_{3, ii} \leq \lambda^{1 - p} \frac{2 (\sigma + 1)}{n} \int_{B_R^c} \frac{d y}{ |y|^{n + 2 \sigma + 2} } = \lambda^{1 - p} |B_1| R^{- 2 \sigma - 2}.
$$
Consequently, we obtain
$$
\frac{ \partial^2 K_\lambda }{\partial x_i^2} (0) \leq - (n + 2 \sigma) c_{n, \sigma} |B_1| \left( 4^{- 2 \sigma - 2} - (1 + \lambda^{1 - p}) R^{- 2 \sigma - 2} \right).
$$
We take $R = R (n, \sigma, p, q, \lambda) > 9$ sufficiently large such that
\begin{equation*}
4^{- 2 \sigma - 2} - (1 + \lambda^{1 - p}) R^{- 2 \sigma - 2} \geq 4^{- 2 \sigma - 3}.
\end{equation*}
Then for all $\lambda \geq 1$ and $1 \leq i \leq n$,
$$
\frac{ \partial^2 K_\lambda }{\partial x_i^2} (0) \leq - (n + 2 \sigma) c_{n, \sigma} |B_1| 4^{- 2 \sigma - 3}.
$$
By the symmetry of the regions in the above integrals, we know that
$$
\frac{ \partial^2 K_\lambda }{\partial x_i \partial x_j} (0) = 0 ~~~~~~ \textmd{if} ~ i \neq j.
$$
This proves Step 3.

\vskip0.1in

{\it Step 4. Construct the desired functions.} Notice that $u_\lambda$ is radially symmetric about the origin, so is $K_\lambda$. Hence,
\begin{equation}\label{eq:DK0=0}
\nabla K_\lambda (0) = 0.
\end{equation}
Then it follows from \eqref{eq:D2Kc3}, \eqref{eq:HessK} and \eqref{eq:DK0=0} that there exist two constants $\delta_0 \in (0, 1/4)$ and $c_5 > 0$ depending only on $n$ and $\sigma$ such that
\begin{equation}\label{eq:DKc5}
| \nabla K_\lambda (x) | \geq c_5 ~~~~~~ \forall \, x \in B_{2 \delta_0} (4 \delta_0 e_1), ~ \lambda \geq 1,
\end{equation}
where $e_1 = (1, 0, \dots, 0) \in \R^n$.

Let
$$
\widetilde{u}_\lambda (x) = \delta_0^q u_\lambda \left( \delta_0 (x + 4 e_1) \right) \quad \mbox{and} \quad \widetilde{K}_\lambda (x) = \delta_0^{q + 2 \sigma - p q} K_\lambda \left( \delta_0 (x + 4 e_1) \right).
$$
Then,
$$
(- \Delta)^\sigma \widetilde{u}_\lambda = \widetilde{K}_\lambda (x) \widetilde{u}_\lambda^p ~~~~~ \textmd{in} ~ \R^n.
$$
It follows from \eqref{eq:c1Kc2}, \eqref{eq:D2Kc3} and \eqref{eq:DKc5} that
$$
- C \le \widetilde{K}_\lambda (x) \leq - c, \ \ c \le | \nabla \widetilde{K}_\lambda(x) | \leq C \ \ \textmd{and} \ \ | \nabla^2 \widetilde{K}_\lambda(x) | \leq C \quad \forall\, x\in B_2 \mbox{ and } \forall \, \lambda \geq 1
$$
for some positive constants $c$ and $C$ depending only on $n$, $\sigma$, $p$ and $q$. Meanwhile, it follows from the definition of $u_\lambda$ that
$$
\lim_{|x| \to \infty} |x|^q \widetilde{u}_\lambda (x) = 1, 
$$
and
$$
\min_{\overline{B}_1} \widetilde{u}_\lambda = \delta_0^q \lambda \to + \infty \quad \mbox{ as }\lambda \to + \infty.
$$
The proof of Theorem \ref{thm:blowup} is completed.
\end{proof}

\bigskip

\noindent X. Du, \ \  T. Jin, \ \  H. Yang

\noindent Department of Mathematics, The Hong Kong University of Science and Technology\\
Clear Water Bay, Kowloon, Hong Kong\\[1mm]
Emails: \textsf{xduah@connect.ust.hk},\ \  \textsf{tianlingjin@ust.hk},\ \  \textsf{mahuiyang@ust.hk}

\medskip

\noindent J. Xiong

\noindent School of Mathematical Sciences, Laboratory of Mathematics and Complex Systems, MOE\\ Beijing Normal University, 
Beijing 100875, China\\[1mm]
Email: \textsf{jx@bnu.edu.cn}

\end{document}